\documentclass[
               %
               %
               %
               %
               %
               %
               %
               %
               %
               %
                 color-hyperref,               
                 graphics,                     
               %
               %
                 redefine-eqnarray,            
               %
               %
               %
               %
                 eucal,                        
                 doublestroke,     
                 upright,                      
               amsmath,amssymb
               ]{amsart}

\usepackage{float}
\usepackage{amssymb}
%

\title[Ideal theory of intersections of prime divisors]{The ideal theory of intersections of prime divisors dominating a  normal Noetherian local domain of dimension two}

\author[W. Heinzer]{William Heinzer}
\address{Department of Mathematics, Purdue University, West
Lafayette, Indiana 47907}

\author[B. Olberding]{Bruce Olberding}
\address{Department of Mathematical Sciences, New Mexico State University,
Las Cruces, NM 88003-8001}

%


%

\theoremstyle{plain}
\newtheorem{theorem}{Theorem}[section]
\newtheorem{corollary}[theorem]{Corollary}
\newtheorem{lemma}[theorem]{Lemma}
\newtheorem{proposition}[theorem]{Proposition}

\theoremstyle{definition}

\newtheorem{notation}[theorem]{Notation}

\theoremstyle{remark}
\newtheorem{remark} [theorem]{Remark}

\usepackage{amsmath}
\usepackage{url}

\def\ff{\frak}
\def\Spec{\mbox{\rm Spec}}

\def\Max{\mbox{\rm Max}}

\def\Zar{\mbox{\rm Zar}}

\def\cal{\mathcal}

\def\X{{\rm Zar}}


\begin{document}

\begin{abstract} Let $R$ be a normal Noetherian local domain of Krull dimension two. We examine intersections of rank one discrete valuation rings  that birationally dominate $R$. We restrict to  the class of prime divisors that dominate $R$ and show that if a collection of such prime divisors is taken below a certain ``level,'' then the intersection is an almost Dedekind domain having the property that every nonzero ideal can be represented uniquely as an irredundant intersection of powers of maximal ideals.

\end{abstract}

\maketitle






\section{Introduction} Throughout this article, $R$ denotes a normal Noetherian local domain of dimension two, and $F$ denotes its quotient field.  
A valuation overring $V$ of $R$ whose maximal ideal contains the maximal ideal of $R$ and whose residue field has transcendence degree 1 over the residue field of  $R$ is a {\it prime divisor that dominates $R$}.\footnote{In the setting of two-dimensional regular local rings, prime divisors that dominate $R$ are  called {\it prime divisors of the second kind} in \cite{ZS}.} 
  The prime divisors that dominate $R$ are precisely the  overrings of $R$ that  arise as the localization of the integral closure of a finitely generated $R$-subalgebra of $F$ at a height one prime ideal that contains the maximal ideal of $R$ \cite{SH}.  
  Prime divisors 
  are familiar objects in the birational algebra of Noetherian local domains. They 
  play an important role in embedded resolution of singularities \cite{Abh2}, function fields of surfaces \cite{Spiv}, and in the theory of integral closure of ideals, where they appear as Rees valuations of ideals \cite{SH}.
  
  Our interest in this article is in the intersection of prime divisors that dominate $R$.  The intersection of finitely many such rings is a PID \cite[(11.11), p.~98]{N}, while 
  the intersection of  all prime divisors that dominate $R$ is simply $R$ and so is quite far from being a PID. Other examples that illustrate the wide range of possibilities for these  intersections  can be found in \cite{HLO, HO2}. In this article we focus on the intersection of prime divisors that dominate $R$ and 
  are within some fixed number of steps away from $R$.  The steps here involve the number of normalized local quadratic transforms needed to reach the prime divisor; see Section 2.  We prove that the intersection in this case is an {\it almost Dedekind domain,} meaning that each localization at a maximal ideal is a DVR (i.e., a rank one discrete valuation ring).  Unlike the situation in Dedekind domains, ideals in an almost Dedekind domain need not factor into a finite product of ``nice'' ideals such as prime or  radical ideals, and in general the factorization theory of almost Dedekind domains is complicated; see for example \cite{HOR} and \cite{LL}  and their references for more on this. 
 
 For commutative rings, the absence of  ideal factorizations  can sometimes  be remedied by intersection decompositions of ideals, as is the case with primary decomposition in Noetherian rings.  It is not hard to see that every ideal in an almost Dedekind domain is an intersection of primary ideals, but because this intersection is typically infinite and may not be able to be refined to an irredundant intersection, this decomposition is too unwieldy to be useful for arbitrary almost Dedekind domains. In Theorem~\ref{qs Prufer} we show that our almost Dedekind domains  are more special in that they admit just such a 
  decomposition: Every nonzero ideal $I$ can be represented uniquely as an irredundant intersection of primary ideals. The primary ideals in an almost Dedekind domain are simply the powers of the maximal ideals, and so the decomposition can be restated for powers of maximal ideals. The powers of maximal ideals, in turn, are precisely
 the nonzero {completely irreducible} ideals of an almost Dedekind domain, 
 where an ideal is {\it completely irreducible} if it is not the intersection of any set of 
 proper overideals.
 
 Our search for a decomposition theory for  rings obtained as an  intersection of prime divisors  is motivated by our work with Laszlo Fuchs in the series of papers \cite{FHO,FHO2,FHO3, FHO4}, where among several other ideal decompositions  we studied those involving completely irreducible ideals. In the article \cite{HO} we characterized the commutative rings for  
which every ideal can be represented uniquely as an irredundant intersection of completely irreducible ideals. The intersections of prime divisors considered in the present article thus provide a new example of a class of rings having such an intersection decomposition.  

We thank Laszlo for introducing us to a number of these topics, first  
 through his seminal article \cite{F} and then through our  work with him on ideal theory. We especially thank him for sharing with us his remarkable and elegant way of thinking about mathematics, as well as his gift for  exposition   
   that all those familiar with his work recognize.


%
%


\section{Normal sequences}

Recall our standing hypothesis that $R$ is a normal Noetherian local domain of Krull dimension $2$ with quotient field $F$. 
For each local overring $S$ of $R$, we let ${\ff X}(S)$ denote the set of valuation overrings $V$ of $S$ that {\it dominate} $S$, meaning that the maximal ideal of $S$ is contained in the maximal ideal of $V$.   
 We show in this section that each valuation ring in ${\ff X}(R)$ is specified by a sequence of ``points'' which   lie on normalized blowups of $\Spec(R)$.  
 
To formalize this,
let ${\ff m}$ denote the maximal ideal of $R$, and let 
 $x_1,\ldots,x_n \in {\ff m} \setminus {\ff m}^2$ be such 
 that $x_1,\dots,x_n$ generate  ${\ff m}$. 
A  {\it local quadratic transform} of $R$ is a ring of the form 
$R_1 =  R[x_1/x_i,\ldots,x_n/x_i]_P$, where   $x_i \not \in {\ff m}^2$ 
and $P$ is a prime ideal of 
$ R[x_1/x_i,\ldots,x_n/x_i] $ that contains ${\ff m}$.  
Let $\overline{R_1}$ denote the integral closure of $R_1$ in $F$.
By the Krull-Akizuki Theorem  \cite[Theorem 33.2, p.115]{N}, $\overline{R_1}$ is a Noetherian domain. 
  If $M$ is a maximal ideal of $\overline{R_1}$ containing ${\ff m}$, we say that 
  the Noetherian local domain $(\overline{R_1})_M$ is a {\it normalized quadratic transform of $R$}. 
  If $\{R_i\}$ is a (finite or infinite) sequence of local overrings of $R$ such that $R = R_0$ and $R_{i+1}$ is a normalized quadratic transform of $R_i$ for each~$i$, then, following 
  Zariski \cite[p.681]{Z} and            Lipman \cite[p. 201]{LipRat}, we say  $\{R_i\}$ is a {\it normal sequence} over $R$.  
  Motivated by the terminology in \cite{Lip}, we say that a local ring $S$ of Krull dimension 2 that occurs in some normal sequence over $R$ is a  {\it point} of $R$. If  $R$ is a two-dimensional  regular local ring, then the regular local overrings of $R$ are
  precisely the points of $R$ \cite[Theorem 3]{Ab}.
  
    These valuation rings in ${\ff X}(R)$ will be the main focus of this section.
     We denote by ${\rm Div}(R)$ the subset of ${\ff X}(R)$ that consists  of the prime divisors that dominate $R$. In the next lemma we note  that the prime divisors that dominate $R$ occur as endpoints of finite normal sequences. This follows from the more general properties of normal sequences collected in the following lemma. 
 Statement (1) is due to Abhyankar \cite[Lemma 12]{Ab} in the case in which $R$ is a two-dimensional regular local ring.  Lipman points out in \cite[p. 202]{LipRat}  that  Abhyankar's proof can be adjusted to the more general setting of infinite normal sequences by
 piecing together the following information in  \cite{ZS}:  Corollary, p.~21;  Proposition 1, p.~330;  Corollary 2, p.~339; and the argument in the middle of p.~392.
  The other statements of the lemma are implicit in the literature of quadratic transforms of regular local rings. We include a few details for lack of a precise reference for our setting.

\begin{proposition} \label{Lipman lemma} $\:$
\begin{itemize}

\item[$(1)$] 
If $\{R_i\}$ is an infinite normal sequence over $R$, then $\bigcup_{i}R_i$ is a valuation ring in ${\ff X}(R)$.  

\item[$(2)$]  If $V \in {\ff X}(R)$, then  there is a unique normal sequence  $\{R_i\}_{i=0}^d$, with $d$ possibly infinite, such that $V = \bigcup_{i=0}^d R_i$. 

\item[$(3)$]  If $S$ is a point of $R$, then there is  a unique normal sequence $\{R_i\}_{i=0}^d$, with $d$ finite, such that $R_d = S$.

\item[$(4)$] 
A valuation ring $V \in {\ff X}(R)$ is  a prime divisor that dominates $R$ if and only if  the normal sequence  along $V$ is finite.

\end{itemize}
\end{proposition}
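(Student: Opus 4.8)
The plan is to prove part (1) essentially by citation and then derive parts (2)--(4) from it together with elementary valuation theory. Two facts will do most of the work. The first is the \emph{domination lemma}: if $S \subseteq W$ are valuation rings of $F$ and $W$ dominates $S$, then $S = W$ (if $x \in W \setminus S$ then $x^{-1} \in {\ff m}_S \subseteq {\ff m}_W$, forcing $1 = x x^{-1} \in {\ff m}_W$). The second is the existence, for each $V \in {\ff X}(R)$, of a canonical \emph{normal sequence along $V$}: starting from $R_0 = R$, given a point $R_i$ that is dominated by $V$, with ${\ff m}_i = (x_1,\dots,x_n)$ and $x_j \notin {\ff m}_i^2$, pick $x_k$ with $v(x_k)$ minimal ($v$ the valuation of $V$); then $R_i[{\ff m}_i/x_k] \subseteq V$, and localizing its integral closure at the center of $V$ yields a normalized quadratic transform $R_{i+1}$ still dominated by $V$. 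A short computation --- using that $x_j/x_k$ and $x_k/x_j$ lie in $V$ whenever $v(x_j) = v(x_k)$ is minimal --- shows $R_{i+1}$ does not depend on the choice of generators or of $x_k$, and moreover that $R_{i+1}$ is the \emph{only} normalized quadratic transform of $R_i$ dominated by $V$. This last uniqueness is the mechanism driving (2)--(4).

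Part (1) is the real obstacle, and here I would follow Lipman's prescription, assembling the argument from the pages of \cite{ZS} listed before the proposition. Writing $S = \bigcup_i R_i$, one checks readily that $S$ is a local domain with quotient field $F$ dominating $R$, with maximal ideal $\bigcup_i {\ff m}_i$. To see that $S$ is a valuation ring it is enough to show that of any two nonzero $a, b \in S$, one divides the other in some $R_N$; the crux, provided by the theory of quadratic transforms of two-dimensional normal local rings, is that the order of Zariski's transform of the ideal $(a,b)R_N$ strictly decreases along the sequence until the transform is principal. The only thing to verify beyond the regular-local case treated by Abhyankar is that interposing integral closures does not break this --- Krull--Akizuki keeps each $R_i$ Noetherian, and the relevant order/transform computations are exactly the cited material in \cite{ZS}. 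I expect to rely on those references here rather than reproduce the bookkeeping.

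Granting (1), here is part (2). Form the normal sequence $\{R_i\}$ along $V$. If it is finite, $\{R_i\}_{i=0}^{d}$, then by construction it cannot be continued, which forces $\dim R_d = 1$; as $R_d$ is Noetherian (Krull--Akizuki) and a valuation ring dominating $R$, it is a DVR, and $R_d = V$ by the domination lemma, so $V = \bigcup_{i=0}^{d} R_i$. If it is infinite, then $S = \bigcup_i R_i$ is a valuation ring by (1); since $V$ dominates every $R_i$ it dominates $S$, so $S = V$ by the domination lemma. For uniqueness, any normal sequence $\{R_i'\}$ with $\bigcup_i R_i' = V$ has each $R_i'$ dominated by $V$ (telescoping the domination relations within the sequence), so by the step-uniqueness above it coincides term by term with the normal sequence along $V$.

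Finally, parts (3) and (4). For (3), let $S$ be a point of $R$ and choose any valuation overring $V$ of $S$ dominating $S$; since $S$ dominates $R$, we have $V \in {\ff X}(R)$. If $\{R_i\}_{i=0}^{d}$ is any normal sequence with $R_d = S$, then every $R_i$ is dominated by $S$, hence by $V$, so step-uniqueness identifies $\{R_i\}_{i=0}^{d}$ with the initial segment of the unique normal sequence along $V$ from part (2). Because normal sequences strictly increase --- a two-dimensional normal Noetherian local ring has a non-principal maximal ideal, so its normalized quadratic transform properly contains it --- the index $d$ at which $S$ appears is determined, and uniqueness follows. For (4): if the normal sequence along $V$ is finite, then $V = R_d$ is a DVR dominating $R$ as in (2); Abhyankar's inequality gives that the residue field of $V$ has transcendence degree at most $1$ over the residue field of $R$, and equality holds because each point $R_i$ in the sequence has residue field finite over that of $R$ (an easy induction using the Nullstellensatz on the fibers of the blowups), while the terminal step --- the localization of a normalized blowup of ${\ff m}_{d-1}$ at a height-one prime of its exceptional divisor --- adjoins transcendence degree exactly $1$; hence $V$ is a prime divisor dominating $R$. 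Conversely, if $V$ is a prime divisor but its normal sequence were infinite, then $V = \bigcup_i R_i$ would have residue field $\bigcup_i (R_i/{\ff m}_i)$ algebraic over the residue field of $R$, contradicting transcendence degree $1$. The only delicate point in (4) is the transcendence-degree count at the last step, where one invokes the standard description of the exceptional divisor of a normalized blowup of the maximal ideal.
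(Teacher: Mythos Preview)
Your argument is correct and follows essentially the same route as the paper's: (1) by citation to Lipman/Zariski--Samuel, (2) by constructing the normal sequence along $V$ and invoking the domination lemma, (3) by reducing to (2) via a valuation dominating $S$, and (4) by a transcendence-degree count on residue fields. The only cosmetic differences are that the paper handles uniqueness in (2) by observing that each $R_{i+1}$ is the center of $V$ on the normalized projective model of $F/R_i$ (citing \cite[pp.~119--120]{ZS}) rather than your direct ``step-uniqueness'' computation, and in (4) the paper invokes the Dimension Inequality \cite[Theorem~15.5]{Mat} where you appeal to the Nullstellensatz and Abhyankar's inequality---these are the same facts in slightly different packaging.
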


\begin{proof} (1) See \cite[p.~202]{LipRat} and the references given there to  \cite{ZS}. 

(2) 
The sequence $\{R_i\}$ is constructed as follows. Since $V$ is a valuation ring that dominates $R$, there is an  $i$ such that $x_iV  =  (x_1,\dots,x_n)V$. 
Thus $S:=R[x_1/x_i,\ldots,x_n/x_i] \subseteq V$.  Let $P$ be the center of $V$ in $\overline{S}$. Then $R_1 = \overline{S}_P$ is a normalized quadratic transform of $R$ that is dominated by $R_1$. If $R_1$ has Krull dimension $1$, then $R_1$ is a DVR dominated by $V$, which forces $R_1 = V$. In this case, we set $d = 1$ and the sequence in (2) is obtained. Otherwise, $R_1$ has Krull dimension $2$, and we repeat the process. In this way we obtain a normal sequence $\{R_i\}_{i=0}^\infty$, possibly infinite, for which $V$ dominates $\bigcup_{i =0}^dR_i$.  If $d$ is finite, then, as we have already observed, $V$ is a DVR with $V = \bigcup_{i =0}^dR_i$. If $d$ is infinite, then, by (1), $\bigcup_{i=0}^\infty R_i$ is a valuation ring dominated by $V$, which forces $\bigcup_i R_i = V$.  
That   $\{R_i\}_{i=0}^d$ is the unique such sequence follows inductively from the following two facts: (a) $R_1$ lies in the normalization of the projective model of $F/R$ defined by $x_1,\ldots,x_n$, and (b) every valuation ring in ${\ff X}(R)$ dominates a unique point in a normalized projective model of $F/R$; see \cite[pp.~119--120]{ZS}.

(3) Let $S$ be a point of $R$.  Then there is a normal sequence $\{R_i\}_{i=0}^d$ such that $R_d = S$. Let $V$ be a valuation ring in ${\ff X}(R)$ that dominates $S$. By~(2), $\{R_i\}_{i=0}^d$ forms part of a unique normal sequence whose union is $V$, so statement (3)  follows. 

(4)  Suppose $V$ is  a prime divisor that dominates $R$. By (2), there is a normal sequence $\{R_i\}_{i=0}^d$ such that $V = \bigcup_{i = 0}^dR_i$.  If $\{R_i\}$ is infinite, then each $R_i$ has Krull dimension~2, so that by the Dimension Inequality \cite[Theorem 15.5, p.~118]{Mat} the residue field of $R_i$ is algebraic over that of $R$. But then the residue field of $V$ is algebraic over the residue field of $R$, contrary to the fact that $V$ is a prime divisor that dominates $R$. Thus $\{R_i\}$ is finite. 

Conversely, if $\{R_i\}_{i=0}^d$ is finite, then $V$ is a normalized quadratic transform of the two-dimensional integrally closed Noetherian local domain $R_{d-1}$. As such, $V$ is a DVR that is a localization of a nonmaximal prime ideal of a two-dimensional local Noetherian domain, and hence the residue field of $V$ is not algebraic over the residue field of $R$. Thus $V$ is a prime divisor that dominates $R$.          
\end{proof}

In light of Proposition~\ref{Lipman lemma}(3) and (4), the set of local overrings of $R$ that appear in a normal sequence are precisely the points of $R$ and the prime divisors that dominate $R$. 
If $S$ is a point of $R$ where 
$R = R_0 \subseteq R_1 \subseteq \cdots \subseteq R_d=S$ is the unique normal sequence that terminates at $S$, we say that the {\it level} of $S$ is $d$.  
Similarly, if $V$ is  a prime divisor that dominates $R$ and $\{R_i\}_{i=0}^d$ is the normal sequence that terminates at $V$,  the {\it level} of $V$ is $d$.


\begin{remark} \label{point remark} {Let $d>0$. Each point $S$ of $R$ of level $d$ is dominated by only finitely  many prime divisors that dominate $R$ of level $d+1$. These prime divisors that dominate $R$ correspond to the Rees valuation rings of the maximal ideal of $S$; see \cite{SH}.    In particular, there are only finitely many prime divisors that dominate $R$ at level 1.
}
\end{remark}

\section{Normal sequences and limit points}
 
 Let $S$ be a local overring of the two-dimensional integrally closed local Noetherian domain $R$.  
We view ${\ff X}(S)$ as a subspace of the Zariski-Riemann space $\Zar(S)$ of valuation rings  of $F$ that contain $S$. The topology on $\Zar(S)$ has  as a basis of open sets the subsets of $\X(S)$ of the form \begin{center}
${\cal U}(t_1,\ldots,t_n):=\{V \in \X(S):t_1,\ldots,t_n \in V\},$ where $ t_1,\ldots,t_n \in F.$
\end{center}
With this topology, $\X(S)$ is a quasicompact $T_1$ space. (It is in fact a spectral space; see for example \cite{DF}, \cite{HK} and \cite{OGraz}.) There is a helpful refinement of the Zariski topology to a Hausdorff topology that is obtained by taking as an open basis of $\X(S)$ the sets of the form $U \cup V$, 
 where $U$ is open and quasicompact in the Zariski topology 
and  $V$ is the complement of a quasicompact open set.
The resulting topology is the {\it patch} (or {\it constructible}) topology on $\X(S)$. For more background on the patch topology for the Zariski-Riemann space of a field, see \cite{FFL} and \cite{OZR} and their references. 
 This topology is quasi-compact, Hausdorff and has a basis of clopen ($=$ closed and open) sets. 
   It is in this topology that we work because of its suitability for dealing with limit points.

\begin{notation} For $X \subseteq \X(R)$, we let
%
\begin{center} $\lim(X) = $ the set of patch limit points of $X$ in $\X(R)$. 
\end{center}
%


\end{notation}

In Lemma~\ref{lim check} and Proposition~\ref{div theorem} we develop some relationships between limit points and normal sequences.  The following lemma, which generalizes \cite[Lemma 4.5]{HLO}, is useful as a means of specifying open neighborhoods of limit points.   

\begin{lemma} \label{closed point} If $S$ is a point of $R$, then 
 ${\ff X}(S)$ is patch clopen in $\X(R)$. 
\end{lemma}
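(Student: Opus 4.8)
The natural approach is induction on the level $d$ of $S$, the case $d=0$ being trivial since then $S=R$. For $d\ge 1$, let $R=R_0\subseteq R_1\subseteq\cdots\subseteq R_d=S$ be the unique normal sequence terminating at $S$ (Proposition~\ref{Lipman lemma}(3)), and set $T:=R_{d-1}$; then $T$ is a point of $R$ of level $d-1$ and $S$ is a normalized quadratic transform of $T$. By the inductive hypothesis, ${\ff X}(T)$ is patch clopen in ${\ff X}(R)$. Since the patch clopen subsets of ${\ff X}(R)$ form a Boolean algebra, and since every trace $\U(t_1,\dots,t_n)\cap{\ff X}(R)$ of a basic Zariski-open set ($t_1,\dots,t_n\in F$) is open and quasicompact, hence patch clopen, in ${\ff X}(R)$, it suffices to exhibit ${\ff X}(S)$ as a finite Boolean combination of ${\ff X}(T)$ and finitely many such traces.

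To this end, write $S=\overline{B}_M$, where $B=T[y_1/y_j,\dots,y_m/y_j]$ for generators $y_1,\dots,y_m$ of ${\ff m}_T$ with $y_j\notin{\ff m}_T^2$, where $\overline{B}$ is the integral closure of $B$ in $F$, and where $M$ is a maximal ideal of $\overline{B}$ with ${\ff m}_T\subseteq M$. Since $\overline{B}$ is Noetherian (Section~2), choose nonzero $M_1,\dots,M_r\in M$ generating $M$, so that ${\ff m}_S=(M_1,\dots,M_r)S$. The plan is then to establish the identity
\[
{\ff X}(S)\;=\;\big(\U(y_1/y_j,\dots,y_m/y_j)\cap{\ff X}(T)\big)\;\cap\;\bigcap_{k=1}^{r}\big({\ff X}(R)\setminus\U(1/M_k)\big),
\]
all sets taken inside ${\ff X}(R)$. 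For the inclusion from left to right: if $V$ dominates $S$, then $V$ dominates $T$ (so $V\in{\ff X}(T)$), $B\subseteq S\subseteq V$, and each $M_k\in{\ff m}_S\subseteq{\ff m}_V$ is a nonzero nonunit of $V$, so $1/M_k\notin V$. For the reverse inclusion: if $V\in{\ff X}(T)$ with $y_i/y_j\in V$ for all $i$ and $1/M_k\notin V$ for all $k$, then $B\subseteq V$, hence $\overline{B}\subseteq V$ since $V$ is integrally closed in $F$; then each $M_k$ lies in ${\ff m}_V$, so the prime ${\ff p}:={\ff m}_V\cap\overline{B}$ contains the maximal ideal $M$ and hence equals $M$; consequently every element of $\overline{B}\setminus M$ is a unit of $V$, so $S=\overline{B}_M\subseteq V$, and ${\ff m}_S=(M_1,\dots,M_r)S\subseteq{\ff m}_V$, so $V$ dominates $S$. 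Granting this identity, ${\ff X}(S)$ is a finite intersection of patch clopen subsets of ${\ff X}(R)$ and hence patch clopen in ${\ff X}(R)$, completing the induction.

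I expect the substantive point to be the reverse inclusion in the displayed identity — recovering the local ring $S$, together with the relation ${\ff m}_S\subseteq{\ff m}_V$, from finitely many membership conditions on $V$. This rests on two facts: that $V$ is integrally closed in $F$ (so $B\subseteq V$ already forces $\overline{B}\subseteq V$), and that $M$ is a \emph{maximal} ideal of the \emph{Noetherian} ring $\overline{B}$ (so $M$ is finitely generated, and the center ${\ff m}_V\cap\overline{B}$ of $V$ on $\overline{B}$ is forced to equal $M$ once it is known to contain $M$). The reason the induction is organized around $T=R_{d-1}$ rather than around $R$ is that the condition $B\subseteq V$ becomes equivalent to the finitely many conditions $y_i/y_j\in V$ only once $T\subseteq V$ is known, and membership in ${\ff X}(T)$ — available inductively as a patch clopen condition — provides exactly this.
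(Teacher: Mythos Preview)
Your proof is correct and follows the same inductive skeleton as the paper: reduce to showing that ${\ff X}(R_{i+1})$ is patch clopen in ${\ff X}(R_i)$, and first cut down to the clopen set $\mathcal{U}(y_1/y_j,\dots,y_m/y_j)\cap{\ff X}(T)$. The difference lies in how the remaining step is handled. The paper invokes the patch-continuity of the center map $\delta:\mathcal{U}\to\Spec(S)$ (cited from \cite[Proposition~4.2]{OZR}) together with the observation that $\{{\ff m}_S\}$ is patch clopen in the Noetherian space $\Spec(S)$, so that ${\ff X}(S)=\delta^{-1}({\ff m}_S)$ is clopen. You instead unpack this by choosing generators $M_1,\dots,M_r$ of $M$ in $\overline{B}$ and writing down the explicit Boolean identity, verifying both inclusions by hand. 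Your route is more elementary and self-contained (no external citation for continuity of the center map), while the paper's route is more conceptual and makes clear that the argument would go through for any localization of a Noetherian overring at a maximal ideal, not just for normalized quadratic transforms.
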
 

\begin{proof} Let $d$ be the level of $S$.   
Since $S$ is a point, there is a normal sequence $\{R_i\}_{i=1}^d$ with $R_d = S$. Thus $${\ff X}(S) = {\ff X}(R_d) \subseteq {\ff X}(R_{d-1}) \subseteq \cdots \subseteq  {\ff X}(R_0) = {\ff X}(R).$$   
We show that ${\ff X}(S)$ is patch clopen in $\X(R)$ by proving that 
 ${\ff X}(R_{i+1})$ is patch clopen   in $\X(R_i)$   for each $0 \leq i < d$.

Let $0 \leq i < d$. 
There exist  $t_1,\dots,t_n \in F$ such that for $A = \overline{R_i[t_1,\ldots,t_n]}$, we have $R_{i+1} = A_{P}$ for some prime ideal $P$ of $A$. Since $S$ has Krull dimension $2$, so does $R_{i+1}$; thus $P$ is a maximal ideal of $A$. 
Let  ${\cal U}:={\cal U}(t_1,\ldots,t_n) \cap \X(R_i)$.   Then ${\ff X}(R_{i+1}) \subseteq {\cal U} \subseteq \X(R_i)$ and ${\cal U} $ is 
 patch clopen in ${\X}(R_i)$, so to show that ${\ff X}(R_{i+1})$ is patch clopen in ${\X}(R_i)$, it suffices to show that  ${\ff X}(R_{i+1})$ is patch clopen in ${\cal U}$.
 
Since $R_{i+1}$ is a Noetherian ring, $\Spec(R_{i+1})$ is a Noetherian space. Thus, since $\{PR_{i+1}\}$ is a closed set in $\Spec(R_{i+1})$, we have that the set $\Spec(R_{i+1}) \setminus \{PR_{i+1}\}$ is    open and    quasicompact in $\Spec(R_{i+1})$. Therefore, $\{PR_{i+1}\}$ is patch clopen in $\Spec(R_{i+1})$. 
The   map $\delta:{\cal U} \rightarrow \Spec(R_{i+1})$ that sends a valuation ring in $\Zar(R_{i+1})$ to its center in $R_{i+1}$  is continuous in the patch topology \cite[Proposition~4.2]{OZR}.
 Thus, since ${\ff X}(R_{i+1}) = \delta^{-1}(PR_{i+1})$, the set  ${\ff X}(R_{i+1})$ is patch clopen in ${\cal U}$, which completes the proof that $
 X(S)$ is patch clopen in ${\X}(R)$. \end{proof} 




\begin{lemma} \label{lim check} Let $U \in {\ff X}(R)$ such that $U$ is not a prime divisor, let $\{R_i\}_{i=0}^\infty$ be the infinite normal sequence along $U$, and let $ X$ be a nonempty subset of ${\ff X}(R)$. Then \begin{center} $U \in \lim(X) \:\:\Longleftrightarrow \: \: {\ff X}(R_i) \cap (X \setminus \{U\}) \ne \emptyset$ for all $i \geq 0$.\end{center}
\end{lemma}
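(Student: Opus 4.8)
The plan is to prove the biconditional in Lemma~\ref{lim check} by establishing each implication separately, exploiting that the patch topology is Hausdorff with a basis of clopen sets and that each ${\ff X}(R_i)$ is patch clopen in $\X(R)$ by Lemma~\ref{closed point}.

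First I would prove the forward direction, or rather its contrapositive: if ${\ff X}(R_j) \cap (X \setminus \{U\}) = \emptyset$ for some $j \geq 0$, then $U \notin \lim(X)$. Since $\{R_i\}$ is the normal sequence along $U$, every $R_i$ is a point of $R$ (it has Krull dimension $2$, as $U$ is not a prime divisor so the sequence is infinite and Proposition~\ref{Lipman lemma}(4) together with its proof forces each $R_i$ two-dimensional), and $U \in {\ff X}(R_j)$ because $U = \bigcup_i R_i \supseteq R_j$ and $U$ dominates $R_j$. By Lemma~\ref{closed point}, ${\ff X}(R_j)$ is a patch-open neighborhood of $U$, and by hypothesis it meets $X$ only in $U$ itself; hence $U$ is not a patch limit point of $X$.

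For the reverse direction I would argue the contrapositive again: suppose $U \notin \lim(X)$, so there is a patch-clopen neighborhood $W$ of $U$ in $\X(R)$ with $W \cap (X \setminus \{U\}) = \emptyset$. The key step is to produce some index $i$ with ${\ff X}(R_i) \subseteq W$; then ${\ff X}(R_i) \cap (X \setminus \{U\}) \subseteq W \cap (X \setminus \{U\}) = \emptyset$, as desired. To do this I would use that $U = \bigcup_i R_i$: since $W$ is patch-open and built from Zariski-basic opens ${\cal U}(t_1,\dots,t_n)$ (more precisely, $W$ is a finite union of differences of such sets), and $U \in W$, there is a basic patch-open neighborhood of $U$ of the form ${\cal U}(t_1,\dots,t_n) \cap (\X(R) \setminus {\cal U}(s_1,\dots,s_m))$ contained in $W$; because each $t_k$ lies in $U = \bigcup_i R_i$, all the $t_k$ lie in some single $R_i$, and because each $s_\ell \notin U$ and $U$ is a valuation ring we may (after possibly enlarging $i$ and inverting) arrange $s_\ell^{-1} \in R_i$ with $s_\ell^{-1}$ in the maximal ideal of $R_i$, forcing no valuation ring dominating $R_i$ to contain $s_\ell$. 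Then ${\ff X}(R_i) \subseteq {\cal U}(t_1,\dots,t_n) \setminus {\cal U}(s_1,\dots,s_m) \subseteq W$.

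The main obstacle I expect is the bookkeeping in that last step: carefully showing that a patch-clopen neighborhood of $U$ can be shrunk to one of the explicit basic form described above, and then that finitely many elements of $U$ and finitely many non-elements of $U$ can all be ``captured'' at a single finite stage $R_i$ of the normal sequence. The finiteness is what makes it work — $U$ is the directed union of the $R_i$, so any finite amount of membership/non-membership data is witnessed at some $R_i$ — but the interaction with the complement of a quasicompact open (the $s_\ell$ part of the patch basis) requires the valuation-theoretic observation that $s \notin U$ implies $s^{-1}$ lies in the maximal ideal of $R_i$ for large $i$, so that ${\ff X}(R_i)$ avoids ${\cal U}(s)$ entirely. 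Everything else is a routine application of Lemma~\ref{closed point} and the definition of patch limit point.
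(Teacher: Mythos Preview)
Your forward direction is essentially the paper's. For the reverse direction the paper takes a different, compactness-based route: instead of shrinking an arbitrary patch neighborhood $W$ of $U$ down to some ${\ff X}(R_i)$, it uses that $\{{\ff X}(R_i)\}$ is a descending chain of patch-closed sets with $\bigcap_i {\ff X}(R_i)=\{U\}$ (Proposition~\ref{Lipman lemma}(1)). If each ${\ff X}(R_i)$ meets $X'=X\setminus\{U\}$, then each meets the patch-quasicompact set $\overline{X'}$, so by the finite intersection property $\overline{X'}\cap\{U\}\ne\emptyset$, i.e.\ $U\in\overline{X'}=\lim(X)\cup X'$. Your argument instead proves directly that the ${\ff X}(R_i)$ form a patch neighborhood basis of $U$, by absorbing the finitely many data $t_k$ and $s_\ell^{-1}$ into a single $R_i$ via $U=\bigcup_i R_i$. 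Both are valid; the paper's is shorter and avoids unpacking the patch basis, while yours yields the extra, reusable fact that $\{{\ff X}(R_i)\}$ is a neighborhood base at $U$.

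One notational slip to fix: the complement of ${\cal U}(s_1,\dots,s_m)$ is $\{V:\text{some }s_\ell\notin V\}$, not $\{V:\text{every }s_\ell\notin V\}$, so your sentence ``each $s_\ell\notin U$'' does not follow from $U\notin{\cal U}(s_1,\dots,s_m)$. The correct basic patch neighborhood to use is ${\cal U}(t_1,\dots,t_n)\cap\bigcap_{\ell}\bigl(\X(R)\setminus{\cal U}(s_\ell)\bigr)$; with that form, each $s_\ell\notin U$ really does hold, and the rest of your argument goes through verbatim.
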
 

\begin{proof} Let $X' = X \setminus \{U\}$.   
By Lemma~\ref{closed point}, each ${\ff X}(R_i)$ is a patch clopen subset of $\X(R)$. Also, by Proposition~\ref{Lipman lemma}(1), $\bigcap_{i\geq 0}{\cal {\ff X}(R_i)} = \{U\}$.  If $U \in \lim(X)$ and $i \geq 0$, then, since  ${\ff X}(R_i)$ is a patch open neighborhood of $U$, the set   $X'   \cap {\ff X}(R_i)$ is nonempty. 
Conversely, suppose that for each $i\geq 0$ the set ${\ff X}(R_i) \cap X'$ is nonempty. 
 Then $\{{\ff X}(R_i)\}_{i=0}^\infty$ is a descending sequence of patch closed subsets of ${\ff X}(R)$, each of which  intersects   $X'$.   Since the patch closure $\overline{X'}$ of $X'$ in $\X(R)$ is patch quasicompact, the set $\overline{X'} \cap (\bigcap_i {\ff X}(R_i)) = \overline{X'} \cap \{U\}$ is nonempty; i.e., $U \in \overline{X'}$. Since $U \not \in  X'$, we conclude that 
%
$U \in \lim(X)$.  
\end{proof}

The following lemma, which is one of the main theorems in \cite{OCompact}, will be the means by which we establish that the intersection of prime divisors of bounded level that dominate $R$ is an almost Dedekind domain. Let $D$ be an integral domain.  For a subset $X $ of $\X(D)$, we let 
  $$A(X) = \bigcap_{V \in X}V \: {\mbox{ \rm and }} \:  J(X) = \bigcap_{V \in X}{\ff M}_V,$$ where for each valuation ring $V$, ${\ff M}_V$ is the maximal ideal  of $V$.  

   \begin{lemma}
 \label{OCompact}  
    \label{rank one theorem}  \cite[Theorem 5.3 and Corollary 5.4]{OCompact} Let $D$ be a domain with quotient field $Q(D)$. The following are equivalent for a nonempty subset $ X \subseteq \X(D)$  with $J(X) \ne 0$.    
       
      \begin{itemize}
      \item[{\rm (1)}] $A(X)$  is a one-dimensional Pr\"ufer domain with   quotient field $Q(D)$. 
      \item[{\rm (2)}] 
       $X$ is contained in a    quasicompact set of rank one valuation rings in $\X(D)$.
       
       \item[{\rm (3)}]  Every valuation ring in the patch closure of $X$ has rank one. 
       \end{itemize} If also $X$ is contained in a quasicompact set of DVRs, then $A(X)$ is an almost Dedekind domain. 
  \end{lemma}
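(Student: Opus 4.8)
The plan is to establish the four implications $(1)\Rightarrow(3)$, $(3)\Rightarrow(2)$, $(2)\Rightarrow(3)$ and $(3)\Rightarrow(1)$, which together yield the stated equivalences, and then to deduce the almost Dedekind assertion. Two preliminary remarks organize everything. First, for each $f\in Q(D)$ the sets $\{V\in\X(D):f\in V\}$ and $\{V\in\X(D):f\in{\ff M}_V\}$ are patch-clopen, so both $A(X)$ and $J(X)$ are unchanged if $X$ is replaced by its patch closure $Y:=\overline{X}^{\,\patch}$; moreover $Y$, being patch-closed in the patch-quasicompact space $\X(D)$, is patch-quasicompact, hence also Zariski-quasicompact. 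Second, a valuation ring $W$ has rank one precisely when there are no $a,b\in{\ff M}_W\setminus\{0\}$ with $a/b^n\in W$ for all $n\ge1$ (equivalently, $\bigcap_{n}a^nW=0$ for every nonzero $a\in{\ff M}_W$); and since $J(Y)\ne0$, no member of $Y$ is the trivial valuation ring $Q(D)$. Now $(1)\Rightarrow(3)$ is immediate: if $A:=A(X)=A(Y)$ is a one-dimensional Pr\"ufer domain with quotient field $Q(D)$, each $W\in Y$ is a valuation overring of $A$, hence equals $A_{\ff p}$ for a nonzero (thus height-one maximal) prime ${\ff p}$ of $A$, and therefore has rank one. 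Likewise $(3)\Rightarrow(2)$ is immediate: $Y$ is Zariski-quasicompact, all of its members have rank one by $(3)$, and $X\subseteq Y$.

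For $(2)\Rightarrow(3)$, write $X\subseteq Z$ with $Z$ Zariski-quasicompact and every member of $Z$ of rank one, and suppose for contradiction that some $W\in Y$ has rank $\ge2$. Then the value group of $W$ has a proper nonzero convex subgroup $H$; choose $a,b\in{\ff M}_W\setminus\{0\}$ with $v_W(b)\in H$, $v_W(a)\notin H$ and $v_W(a),v_W(b)>0$, so that $v_W(a)>n\,v_W(b)$ for all $n\ge1$. For each $m\ge1$ the patch-open set $N_m:=\{V\in\X(D):a/b^j\in V\text{ for }1\le j\le m,\ 1/b\notin V\}$ contains $W$ and hence meets $X$; pick $V_m\in N_m\cap X\subseteq Z$, so that $b\in{\ff M}_{V_m}$ and $v_{V_m}(a)\ge m\,v_{V_m}(b)$. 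For $k\ge1$ put $G_k:=\{V:b^k/a\in V\}\cup\{V:1/b\in V\}$, a quasicompact Zariski-open set. Then $\{G_k\}_{k\ge1}$ covers $Z$: a member of $Z$ either inverts $b$, hence lies in every $G_k$, or has $b$ in its maximal ideal, and then, since it has rank one and $v(b)>0$, the element $b^k/a$ lies in it for all sufficiently large $k$. On the other hand $V_m\notin G_k$ whenever $k<m$, because $V_m$ does not invert $b$ and $v_{V_m}(a)\ge m\,v_{V_m}(b)>k\,v_{V_m}(b)$ forces $b^k/a\notin V_m$. Hence no finite subfamily of $\{G_k\}$ covers $Z$, contradicting the quasicompactness of $Z$. (The rank-one hypothesis is used precisely to ensure that $\{G_k\}$ covers $Z$.)

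For $(3)\Rightarrow(1)$, assume every member of $Y$ has rank one, fix $0\ne t\in J(Y)$, and set $A:=A(X)=A(Y)$. Then $A$ is integrally closed and $Q(A)=Q(D)$: given $f\in Q(D)$, the patch-clopen sets $\{W\in Y:t^nf\in W\}$ increase with $n$ (because $t\in{\ff M}_W$) and cover $Y$ (because rank one gives $v_W(t^nf)\ge0$ for $n$ large), so patch-quasicompactness yields $t^Nf\in A$ for some $N$; in particular $A[1/t]=Q(D)$. It remains to show that $A$ is a one-dimensional Pr\"ufer domain, and this is the step I expect to be the main obstacle. I would approach it by proving that $Y$ is exactly the set of rank-one valuation overrings of $A$ — equivalently, that every valuation overring of $A$ other than $Q(D)$ lies in $Y$ — by converting the quasicompactness and rank-one hypotheses on $Y$ into the statement that every finitely generated ideal of $A$ is locally principal (for a two-generated ideal $(a,b)$, exhibiting $r,s\in A$ with $ab=ra^2+sb^2$ via a gluing along a finite patch-clopen partition of $Y$). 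Granting this, $(3)$ forces every valuation overring of $A$ distinct from $Q(D)$ to have rank one, integral closedness then gives that $A$ is Pr\"ufer, and $A[1/t]=Q(D)$ gives $\dim A=1$. Finally, when $X$ lies in a Zariski-quasicompact set of DVRs, a further quasicompactness argument — a truncated-neighborhood argument as in $(2)\Rightarrow(3)$, now exploiting that $t$ lies in the maximal ideal of each DVR involved — shows that no member of $Y$ has value group dense in $\mathbb R$, so every member of $Y$, being rank one, is a DVR; since each localization $A_{\ff m}$ is then a member of $Y$ (being a valuation overring of $A$ distinct from $Q(D)$), $A_{\ff m}$ is a DVR — equivalently ${\ff m}\ne{\ff m}^2$ — for every maximal ideal ${\ff m}$ of $A$, i.e.\ $A(X)$ is almost Dedekind.
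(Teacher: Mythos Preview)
The paper does not supply a proof of this lemma at all: it is quoted verbatim from \cite[Theorem~5.3 and Corollary~5.4]{OCompact} and used as a black box. So there is no ``paper's own proof'' to compare your attempt against; any comparison would have to be with Olberding's original argument in \cite{OCompact}, not with anything in the present article.

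Taken on its own terms, your implications $(1)\Rightarrow(3)$, $(3)\Rightarrow(2)$, and $(2)\Rightarrow(3)$ are clean and correct; the compactness contradiction in $(2)\Rightarrow(3)$ is a nice argument. The genuine gap is exactly where you flag it: in $(3)\Rightarrow(1)$ you assert that one can produce global elements $r,s\in A$ with $ab=ra^{2}+sb^{2}$ ``via a gluing along a finite patch-clopen partition of $Y$,'' but you do not justify that such gluing is possible. Having local solutions $(r_{i},s_{i})\in A(Y_{i})\times A(Y_{i})$ on the pieces $Y_{i}$ of a clopen partition does not by itself yield a global pair in $A(Y)\times A(Y)$; one needs an approximation or sheaf-type statement for $A$ relative to the representation $A=\bigcap_{W\in Y}W$, and that is precisely the nontrivial content of the cited theorem. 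Similarly, the almost Dedekind coda is only sketched: you would need to make precise the ``truncated-neighborhood'' argument showing that a patch limit of DVRs in $\X(D)$ (with a common nonzero element $t$ in every maximal ideal) is again a DVR, which is not quite the same configuration as in your $(2)\Rightarrow(3)$ argument. In short, your outline is sound and the easy directions are done, but the heart of the result---the Pr\"ufer conclusion in $(3)\Rightarrow(1)$---remains to be proved, which is consistent with the paper's decision to import the lemma rather than reprove it.
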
 

Ultimately we want to apply the lemma to sets of prime divisors that dominate $R$, but we show first in the next proposition that the lemma has a more general application to rank one valuation rings.  

\begin{proposition} \label{div theorem} 
Let $X$ be a nonempty set of rank one valuation rings in ${\ff X}(R)$.    
The following statements are equivalent and are sufficient for $A(X)$ to be a one-dimensional Pr\"ufer domain with nonzero Jacobson radical.     
\begin{itemize}
\item[{\rm (1)}] Each valuation ring in $\lim(X)$ is a prime divisor that dominates $R$.

\item[{\rm (2)}]  For each infinite normal sequence $\{R_i\}$ there is $i \geq 0$ such that  $$|\X(R_i) \cap X| \leq 1.$$


\item[{\rm (3)}] 

For each infinite normal sequence $\{R_i\}$ either there is $i \geq 0$ such that $ \X(R_i) \cap X = \emptyset$ or 
 the valuation ring $\bigcup_{i=0}^\infty R_i$  is  a patch isolated point in $X$.

 \end{itemize}
 
\end{proposition}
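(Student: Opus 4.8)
The plan is to prove the cycle $(1)\Rightarrow(3)\Rightarrow(2)\Rightarrow(1)$, and then, separately, derive the sufficiency of any (hence all) of these conditions for $A(X)$ to be a one-dimensional Pr\"ufer domain with nonzero Jacobson radical by invoking Lemma~\ref{rank one theorem}. The bridge between the combinatorial conditions (2) and (3) and the topological hypothesis of Lemma~\ref{rank one theorem} will be supplied by Lemma~\ref{lim check}, which translates membership in $\lim(X)$ into the statement that every ${\ff X}(R_i)$ along the relevant infinite normal sequence meets $X\setminus\{U\}$.

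For $(1)\Rightarrow(3)$: fix an infinite normal sequence $\{R_i\}$ and let $U=\bigcup_i R_i$, which by Proposition~\ref{Lipman lemma}(1) is a valuation ring in ${\ff X}(R)$ that, being the union of an \emph{infinite} normal sequence, is \emph{not} a prime divisor (Proposition~\ref{Lipman lemma}(4)). If $\X(R_i)\cap X\ne\emptyset$ for every $i$, I want to conclude that $U$ is a patch isolated point of $X$. Suppose not; then $U\in\lim(X)$, so by (1) $U$ would be a prime divisor that dominates $R$, a contradiction. Hence either some $\X(R_i)\cap X=\emptyset$, or $U\in X$ and $U\notin\lim(X)$, which is exactly the assertion that $U$ is patch isolated in $X$. (Here I use that $\X(R_i)\cap X\ne\emptyset$ for all $i$ together with Lemma~\ref{lim check} forces $U\in X$ once $U\notin\lim(X)$: if $U\notin X$ then $X\setminus\{U\}=X$ meets every $\X(R_i)$, so Lemma~\ref{lim check} gives $U\in\lim(X)$.)

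For $(3)\Rightarrow(2)$: again fix an infinite normal sequence and set $U=\bigcup_i R_i$. In the first case of (3) we are done with $i$ as given. In the second case, $U$ is patch isolated in $X$, so there is a patch-open neighborhood $W$ of $U$ with $W\cap X=\{U\}$; since $\{\X(R_i)\}$ is a descending chain of patch-clopen sets (Lemma~\ref{closed point}) with intersection $\{U\}$ (Proposition~\ref{Lipman lemma}(1)), patch quasicompactness of $\X(R)$ forces $\X(R_i)\subseteq W$ for some $i$, whence $|\X(R_i)\cap X|\le 1$. For $(2)\Rightarrow(1)$: let $U\in\lim(X)$ and suppose, for contradiction, that $U$ is not a prime divisor; then the normal sequence $\{R_i\}$ along $U$ is infinite, and by Lemma~\ref{lim check} each $\X(R_i)\cap(X\setminus\{U\})$ is nonempty. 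Thus each $\X(R_i)\cap X$ contains a point other than $U$; since the $\X(R_i)$ are nested and $U$ lies in all of them, $\X(R_i)\cap X$ has at least two elements (namely $U$ is in it too, as $U\in\lim(X)\subseteq\overline{X}$ and... ) — more carefully, I will argue that $\X(R_i)\cap X$ has cardinality $\ge 2$ for all $i$ by exhibiting two distinct members, contradicting (2). The only delicate point is whether $U\in\X(R_i)\cap X$; if $U\notin X$ this still works since $X\setminus\{U\}=X$ meets $\X(R_i)$ in at least... one point, so I instead argue: pick $i$ with $|\X(R_i)\cap X|\le 1$; then $\X(R_i)\cap(X\setminus\{U\})$ has at most one element, and passing to $R_{i+1}$, $\X(R_{i+1})\cap(X\setminus\{U\})$ is a nonempty subset of it, hence equals a singleton $\{W\}$ with $W\ne U$; but then for $j$ large enough $W\notin\X(R_j)$ (as $\bigcap_j\X(R_j)=\{U\}$ and $\X(R_j)$ are patch-clopen), so $\X(R_j)\cap(X\setminus\{U\})=\emptyset$, contradicting Lemma~\ref{lim check}. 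Hence $U$ is a prime divisor.

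Finally, for the sufficiency: assume the equivalent conditions hold. Every valuation ring in $X$ has rank one by hypothesis, and I claim every valuation ring in the patch closure $\overline X$ has rank one. Indeed, a $V\in\overline X\setminus X$ lies in $\lim(X)$, hence by (1) is a prime divisor that dominates $R$, which is a DVR and in particular has rank one; and $V\in X$ already has rank one. Moreover $J(X)\ne 0$: pick any $0\ne r$ in the maximal ideal ${\ff m}$ of $R$; since every $V\in X$ dominates $R$, $r\in{\ff M}_V$, so $r\in J(X)$. By Lemma~\ref{rank one theorem} (equivalence of (1) and (3) there, applied with $D=R$), $A(X)$ is a one-dimensional Pr\"ufer domain with quotient field $F$; and $J(X)=\bigcap_{V\in X}{\ff M}_V$ contracts into the Jacobson radical of $A(X)$ and is nonzero, so $A(X)$ has nonzero Jacobson radical. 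The main obstacle I anticipate is the bookkeeping in $(2)\Rightarrow(1)$ around whether $U$ itself belongs to $X$ and to each $\X(R_i)\cap X$; the clean way around it, as sketched, is to work throughout with $X\setminus\{U\}$ and to exploit that any valuation ring $W\ne U$ in the nested intersection $\bigcap_i\X(R_i)=\{U\}$ must be excluded at some finite stage, since each $\X(R_i)$ is patch-clopen.
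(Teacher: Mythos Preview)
Your argument is correct and uses the same toolkit (Proposition~\ref{Lipman lemma}, Lemma~\ref{closed point}, Lemma~\ref{lim check}, Lemma~\ref{rank one theorem}) as the paper, but you traverse the cycle in the \emph{opposite} direction: the paper proves $(1)\Rightarrow(2)\Rightarrow(3)\Rightarrow(1)$, whereas you do $(1)\Rightarrow(3)\Rightarrow(2)\Rightarrow(1)$. This reversal has two consequences. First, your $(3)\Rightarrow(2)$ needs the finite-intersection-property argument that the nested patch-clopen sets ${\ff X}(R_i)$, whose intersection is $\{U\}$, eventually sit inside a prescribed patch-open neighborhood $W$ of $U$; the paper never needs this, because in its direction $(2)\Rightarrow(3)$ one simply observes that the unique member of ${\ff X}(R_i)\cap X$ for $i\gg 0$ dominates every $R_i$ and hence equals $U=\bigcup_i R_i$, which is then visibly isolated by the clopen set ${\ff X}(R_i)$. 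Second, your $(2)\Rightarrow(1)$ is (as you acknowledge) the laborious step, with the case split on whether $U\in X$ and the observation that any $W\ne U$ in ${\ff X}(R_i)\cap(X\setminus\{U\})$ must fall out of ${\ff X}(R_j)$ for large $j$; the paper's $(3)\Rightarrow(1)$ is by contrast a one-line appeal to Lemma~\ref{lim check}. So the paper's ordering buys cleaner bookkeeping, while yours buys nothing extra but is still valid. One small point: in your $(2)\Rightarrow(1)$ you invoke the normal sequence along $U$ without first checking that $U$ dominates $R$. The paper does prove this explicitly (in its $(3)\Rightarrow(1)$, via the argument that $x^{-1}\in U$ would place $x^{-1}$ in infinitely many $V\in X$); you should either include that argument or note that it is automatic from the convention that $\lim(X)$ is computed inside ${\ff X}(R)$.
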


\begin{proof} Lemma~\ref{rank one theorem}
implies that  (1) is sufficient for $A(X)$ to be a one-dimensional Pr\"ufer domain with nonzero Jacobson radical. It remains to show the equivalence of (1)--(3). 

(1) $\Rightarrow$ (2)
 Let  $\{R_i\}$ be an infinite normal sequence, and let $U = \bigcup_{i}R_i$. By Proposition~\ref{Lipman lemma}(1), $U$ is a valuation overring of $R$, and, by Proposition~\ref{Lipman lemma}(4), $U$ is not a prime divisor that dominates $R$. By (1), $U \not \in \lim(X)$, so by Lemma~\ref{lim check} we have   $\X(R_i) \cap (X \setminus \{U\}) = \emptyset$ for  $i \gg 0$. Statement (2) now follows.

(2) $\Rightarrow$ (3) Let $\{R_i\}$ be an infinite normal sequence over $R$, let $V = \bigcup_i R_i$, and suppose that for each  $i \geq 0$, $X \cap \X(R_i) \ne \emptyset$. 
By (2) there is for $i \gg 0$ exactly one valuation ring $U \in X$ that dominates  $R_i$.  Thus $V = \bigcup_{i}R_i \subseteq U$ and since $U$ dominates the valuation ring $V$, it follows that $V = U \in X$.  
To see that  $V \not \in \lim(X)$, choose $i \gg 0$ such that $V$ is the only valuation ring in $X$ that dominates $R_i$.  By Lemma~\ref{closed point}, ${\ff X}(R_i)$ is a patch open neighborhood of $V$. Since this set contains no other members of $X$, we conclude that $V$ is a patch isolated point in $X$.


(3) $\Rightarrow$ (1)  
Let $U \in \lim(X)$.     
We observe first   that $U$ dominates $R$.  
 Indeed, let $0 \ne x $ be in the maximal ideal of $R$, and suppose $x \not \in {\ff M}_U$.   Then $U \in {\cal U}(x^{-1})$, and since $U \in \lim(X)$, there are infinitely many valuation rings $V$ in $X \cap {\cal U}(x^{-1})$. We have $x^{-1} \in V$ for each such valuation ring $V$, so that $x \not \in {\ff M}_V$, contrary to the fact that each $V \in X$ dominates $R$. Thus $U$ dominates $R$.

 Now let $\{R_i\}$ be the normal sequence along $U$.  If $\{R_i\}$ is infinite, then Lemma~\ref{lim check} implies that  $X  \cap \X(R_i) \ne \emptyset$ for all $i$, which, since $U$ is not a patch isolated point in $X$, is contrary to (3). Thus $\{R_i\}$ is finite and so $U$ is a prime divisor. 
 %
%
%
%
\end{proof} 


 




\section{Prime divisors of bounded level}

 For each integer $d \geq 1$, we denote by ${\rm Div}_d(R)$ the set of prime divisors that dominate $R$ and are of level at most~$d$.  For $d=1$, this set is finite and 
  consists of the Rees valuation rings for the maximal ideal of $R$, but for $d >1$, the set ${\rm Div}_d(R)$ is infinite.  In this section we are interested in the rings obtained as  intersections of valuation rings from the set ${\rm Div}_d(R)$. In Theorem~\ref{qs Prufer}  these rings are shown to be almost Dedekind domains by proving that ${\rm Div}_d(R)$ is patch closed and applying Proposition~\ref{div theorem}.

  This sequence of ideas requires 
more information about the patch limit points of this subset of $\X(R)$, and for this purpose we recall the notion of the Cantor-Bendixson derivative of a topological space $X$: Define $X^0 = X$, and for each ordinal number $\alpha$ let $X^{\alpha+1}$ denote the set of limit points of $X^\alpha$.   For each limit ordinal $\lambda$, let $X^{\lambda} = \bigcap_{\alpha< \lambda}X^\alpha$. The closed set $X^\alpha$ of $X$ is the $\alpha$-th {\it Cantor-Bendixson derivative} of $X$. The {\it Cantor-Bendixson rank} of $X$ is the smallest ordinal number $\alpha$ for which $X^{\alpha} = X^{\alpha + 1}$.  


\begin{lemma} \label{count} \label{CB} Let 
   $d \geq 2$, and let $X$ be a nonempty set of rank one valuation rings in ${\ff X}(R)$ such that each point of $R$ of level $d$ is dominated by 
   at most finitely many valuation rings in $X$. Then  
   $X^1 \subseteq   {\rm Div}_{d}(R)$. In particular,
if    $X \subseteq {\rm Div}_d(R)$, 
  then $X^{d} = \emptyset$.

   \end{lemma}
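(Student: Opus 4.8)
The plan is to prove the two assertions of Lemma~\ref{CB} in sequence, with the first (that $X^1 \subseteq {\rm Div}_d(R)$) carrying essentially all the work and the second (that $X^d = \emptyset$ when $X \subseteq {\rm Div}_d(R)$) following by an induction on $d$ that feeds on the first.

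For the first assertion, let $U \in X^1 = \lim(X)$; I want to show $U$ is a prime divisor dominating $R$ of level at most $d$. First I would argue, exactly as in the proof of (3)$\Rightarrow$(1) in Proposition~\ref{div theorem}, that $U$ dominates $R$: if some nonzero $x$ in the maximal ideal of $R$ were not in ${\ff M}_U$, then $U$ lies in the Zariski-open set ${\cal U}(x^{-1})$, which as a patch neighborhood of the limit point $U$ must contain infinitely many members of $X$, each of which then fails to dominate $R$, a contradiction. Next, let $\{R_i\}$ be the normal sequence along $U$ (Proposition~\ref{Lipman lemma}(2)); I claim it is finite of length at most $d$. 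Suppose instead it has length $> d$ (this includes the infinite case). Then $R_d$ is a point of $R$ of level $d$, and ${\ff X}(R_{d})$ is a patch clopen neighborhood of $U$ by Lemma~\ref{closed point} (and in the infinite case $U \in {\ff X}(R_d)$; in the finite case of length $>d$, also $U \in {\ff X}(R_d)$ since $U$ dominates $R_d$). Since $U \in \lim(X)$, the neighborhood ${\ff X}(R_d)$ contains infinitely many valuation rings of $X$, so infinitely many valuation rings of $X$ dominate the point $R_d$ of level $d$ — contradicting the hypothesis on $X$. Hence the normal sequence along $U$ has length $\le d$, so by Proposition~\ref{Lipman lemma}(4) $U$ is a prime divisor dominating $R$, and it is of level $\le d$, i.e. $U \in {\rm Div}_d(R)$.

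For the ``in particular'' part, assume $X \subseteq {\rm Div}_d(R)$; I would show $X^d = \emptyset$ by induction on $d \ge 2$ (the base case $d=1$ being immediate since ${\rm Div}_1(R)$ is finite, hence has no limit points, though strictly we only need $d \ge 2$). The key observation is that any $X \subseteq {\rm Div}_d(R)$ automatically satisfies the hypothesis of the lemma: by Remark~\ref{point remark}, each point $S$ of $R$ of level $d-1$ is dominated by only finitely many prime divisors of level $d$, and a point of level $d$ (being two-dimensional) is dominated by no prime divisor of level $\le d$ at all; so indeed each point of level $d$ is dominated by at most finitely many members of $X$. Applying the first assertion, $X^1 \subseteq {\rm Div}_d(R)$. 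But more sharply, I would check that $X^1 \subseteq {\rm Div}_{d-1}(R)$: if $U \in X^1$ had level exactly $d$, then $R_{d-1}$ is a point of level $d-1$, ${\ff X}(R_{d-1})$ is a patch neighborhood of $U$ (Lemma~\ref{closed point}), and it meets $X$ infinitely often, giving infinitely many prime divisors of level $d$ dominating the level-$(d-1)$ point $R_{d-1}$, against Remark~\ref{point remark}. So $X^1 \subseteq {\rm Div}_{d-1}(R)$, and the inductive hypothesis applied to $X^1$ (with $d-1$ in place of $d$; note $X^1$ is again a set of rank one valuation rings, being a patch-closed subset of $X$) gives $(X^1)^{d-1} = \emptyset$, i.e. $X^d = \emptyset$.

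The main obstacle is bookkeeping around the boundary between ``point of level $d$'' and ``prime divisor of level $d$'' — making sure that when $U$ is a limit point with normal sequence of length $\ge d$ we correctly identify which $R_i$ is a \emph{point} (two-dimensional, level $d$) whose clopen neighborhood ${\ff X}(R_i)$ we use, and that $U$ genuinely lies in that neighborhood in all cases (finite-of-length-$>d$, and infinite). One must also be careful that the containment ${\ff X}(R_i) \supseteq {\ff X}(R_{i+1})$ and Lemma~\ref{closed point}'s clopen-ness are invoked only for \emph{points} $R_i$, which is exactly why the level bound $d$ (rather than $d$ itself being the level of a prime divisor) is the right cutoff; the hypothesis $d \ge 2$ ensures level-$d$ points exist in the relevant generality. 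Once this is set up cleanly, both the continuity/quasicompactness input and the finiteness input (Remark~\ref{point remark}) slot in directly.
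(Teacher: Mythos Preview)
Your proof is correct and follows essentially the same approach as the paper's. The paper is terser: for the first assertion it argues that the normal sequence along $U$ has length $<d$ (rather than your $\leq d$), and for the second it simply iterates the containment $\lim({\rm Div}_k(R)) \subseteq {\rm Div}_{k-1}(R)$ down to the finite set ${\rm Div}_1(R)$. Your explicit separation of the case ``length $>d$'' from ``length $=d$,'' together with the use of Remark~\ref{point remark} to drop from ${\rm Div}_d(R)$ to ${\rm Div}_{d-1}(R)$, is the same argument with the bookkeeping spelled out. One minor slip that does not affect validity: the parenthetical ``being a patch-closed subset of $X$'' is not right, since $X^1 = \lim(X)$ need not be contained in $X$; but this is harmless, as you have already shown $X^1 \subseteq {\rm Div}_{d-1}(R)$, which already guarantees that $X^1$ consists of rank one valuation rings.
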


   \begin{proof}  Let $U \in \lim(X)$. As in the proof that (3) implies (1) in  Proposition~\ref{div theorem}, $U$ dominates $R$.  
  Let $\{R_i\}_{i=0}^n$ be the normal sequence along $U$, where $n \in {\mathbb N} \cup \{\infty\}$.  To see that $n < d$, suppose to the contrary that $n\geq d$.  
  By Lemma~\ref{closed point}, ${\ff X}(R_d)$  is a patch open neighborhood of $U$ in $\Zar(R)$. By assumption, this neighborhood contains only finitely many valuation rings in $X$, a contradiction to the fact  that $U \in \lim(X)$.   Thus $n < d$. By Proposition~\ref{Lipman lemma}(4),  $U$ is a prime divisor of  level $<d$.  This proves that  $\lim(X) \subseteq   {\rm Div}_{d-1}(R)$. 
  For the last assertion, we have shown that for each $k \geq 1$ we have  $\lim({\rm Div}_{k}(R)) \subseteq   {\rm Div}_{k-1}(R).$ Thus $X^{d-1} \subseteq  {\rm Div}_1(R)$.  Since 
   ${\rm Div}_1(R)$ is finite and nonempty, $X^{d} = \emptyset$.  
 %
 %
%
  %
 %
  %
\end{proof}

Part 1 of the following theorem is proved in \cite[Corollary 4.13]{HLO} in the case in which $R$ is a regular local ring. 

\begin{theorem} \label{qs Prufer}  Let   $d \geq 0$, and let $X$ be a nonempty set of prime divisors that dominate $R$ and occur at level at most $d$. Then
\begin{itemize}
\item[$(1)$]  $A:=A(X)$ is an almost Dedekind domain with nonzero Jacobson radical. 

\item[$(2)$] Every finitely generated ideal of $A$ is principal.

\item[$(3)$]  A maximal ideal $M$ of $A$ is finitely generated if and only if $A_M$ is a patch isolated point in $X$.

\item[$(4)$]  Every nonzero ideal of $A(X)$ can be expressed uniquely as an irredundant intersection of primary ideals (equivalently, powers of maximal ideals).

\end{itemize}

 \end{theorem}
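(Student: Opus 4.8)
The plan is to let the groundwork do the work: once ${\rm Div}_d(R)$ is shown to be patch closed, part~(1) is a direct application of Proposition~\ref{div theorem} and Lemma~\ref{rank one theorem}, while parts~(2)--(4) are extracted from the resulting almost Dedekind structure together with the finiteness of the Cantor--Bendixson rank supplied by Lemma~\ref{CB}. We may assume $d\ge 2$: for $d\le 1$ the set $X$ is finite (it is empty when $d=0$ and contained in the finite set ${\rm Div}_1(R)$ when $d=1$, by Remark~\ref{point remark}), so $A(X)$ is a semilocal principal ideal domain by \cite[(11.11), p.~98]{N} and all four assertions are immediate. To see that ${\rm Div}_d(R)$ is patch closed, apply Lemma~\ref{CB} with ${\rm Div}_d(R)$ in place of $X$: its hypothesis holds vacuously, since if $S$ is a point of $R$ of level $d$ and $V\in{\ff X}(S)$, then by Proposition~\ref{Lipman lemma}(2)--(3) the normal sequence along $V$ contains the length-$d$ normal sequence terminating at $S$ as an initial segment, and as $S$ has Krull dimension $2$ this forces the level of $V$ to exceed $d$; hence no member of ${\rm Div}_d(R)$ dominates $S$. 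Thus ${\rm Div}_d(R)^1\subseteq{\rm Div}_d(R)$, so ${\rm Div}_d(R)$ is patch closed, hence patch quasicompact, and it consists of DVRs, each prime divisor being a localization at a height-one prime of an integrally closed Noetherian domain.

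For~(1): every $V\in X$ dominates $R$, so every nonzero element of the maximal ideal of $R$ lies in ${\ff M}_V$ for all $V\in X$, whence $J(X)\ne 0$; and Lemma~\ref{CB} (whose hypothesis again holds, since no member of $X\subseteq{\rm Div}_d(R)$ dominates a point of level $d$) gives $\lim(X)\subseteq{\rm Div}_d(R)$, so every valuation ring in $\lim(X)$ is a prime divisor that dominates $R$. Proposition~\ref{div theorem}(1) then shows $A:=A(X)$ is a one-dimensional Pr\"ufer domain with nonzero Jacobson radical, and since $X$ is contained in the quasicompact set ${\rm Div}_d(R)$ of DVRs, the last sentence of Lemma~\ref{rank one theorem} upgrades this to: $A$ is almost Dedekind. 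This proves~(1).

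Parts~(2) and~(3) rest on the fact that the patch closure of $X$ is a quasicompact, totally disconnected Hausdorff space. Given a finitely generated ideal $I=(a_1,\dots,a_n)$ of $A$, the sets $X_i=\bigcap_{j}{\cal U}(a_j/a_i)\cap X$ are patch clopen, they cover $X$, and after disjointification they give a clopen partition of $X$ on each piece of which $I$ becomes principal. The remaining task is to assemble a single generator: I would produce $a\in I$ with $v(a)=\min_i v(a_i)$ at every $V\in X$, using patch-quasicompactness to reduce to finitely many clopen pieces and the fact that a finite intersection of prime divisors is a PID \cite[(11.11), p.~98]{N} to glue across them; this yields~(2). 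For~(3), if a maximal ideal $M$ of $A$ is finitely generated then $M=xA$ by~(2), and $x$ is a non-unit in exactly one member of $X$, namely $A_M$ (which therefore lies in $X$); hence $\{V\in X: x^{-1}\notin V\}=\{A_M\}$ is patch clopen and $A_M$ is patch isolated. Conversely, a patch-clopen set isolating $A_M$ within $X$ furnishes, by the same assembly argument, an element generating $M$.

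Part~(4) is where I expect the real difficulty to lie. For a nonzero ideal $I$ of $A$, setting $Q_M=IA_M\cap A$ --- an $M$-primary ideal, and a power of $M$ when $M$ is finitely generated, these being precisely the nonzero completely irreducible ideals of the almost Dedekind domain $A$ --- one has $I=\bigcap_{M\supseteq I}Q_M$ automatically. The content is to refine this to an irredundant intersection and to establish uniqueness, and the crucial resource is that the set $Y=\{V\in X: I\subseteq{\ff M}_V\}$ is patch closed in $X$, hence has finite Cantor--Bendixson rank by Lemma~\ref{CB}. I would induct on that rank: the components indexed by patch-isolated points of $Y$ are forced to appear, since deleting one alters the localization of $I$ at the corresponding maximal ideal, while the components at non-isolated points are redundant; finiteness of the Cantor--Bendixson rank is precisely what makes the peeling terminate and what rules out the accumulation pathologies that obstruct irredundant decompositions in a general almost Dedekind domain. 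Uniqueness would follow by showing that any irredundant primary decomposition of $I$ uses exactly the primary ideals attached to the patch-isolated points of $Y$ --- equivalently, by verifying that $A$ satisfies the criterion of \cite{HO} for rings in which every ideal has a unique irredundant representation as an intersection of completely irreducible ideals. The main obstacle is thus translating ``finite Cantor--Bendixson rank of the relevant patch-closed set'' into the combinatorial control needed both to carry out this refinement and to match an arbitrary irredundant decomposition with the canonical one.
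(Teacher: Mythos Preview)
Your treatment of (1) is correct and essentially identical to the paper's: Lemma~\ref{CB} shows the patch closure of $X$ lies in ${\rm Div}_d(R)$ and hence consists of DVRs, so Proposition~\ref{div theorem} together with the final clause of Lemma~\ref{rank one theorem} gives the almost Dedekind conclusion with nonzero Jacobson radical. The preliminary observation that ${\rm Div}_d(R)$ is patch closed is fine and is implicit in the paper's use of Lemma~\ref{CB}.

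For (2)--(4) the paper takes a very different and much shorter route: it simply invokes \cite[Theorem~6.1]{OCompact} for~(2), \cite[Lemma~6.3]{HOR} for~(3), and, after recording $X^{d+1}=\emptyset$ from Lemma~\ref{CB}, \cite[Theorem~6.6 and Corollary~6.7]{HOR} for~(4). Your attempt to prove these directly is reasonable in spirit---your sketches point toward what those external references actually establish---but as written there are genuine gaps.

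In~(2), the gluing step is not justified by the tool you cite. The result \cite[(11.11)]{N} concerns \emph{finite} collections of prime divisors, whereas your clopen pieces $X_i$ are typically infinite; knowing that $IA(X_i)=a_iA(X_i)$ on each piece does not by itself produce a single $a\in A$ with $aA(X_i)=a_iA(X_i)$ for all $i$. The missing ingredient is an approximation argument that genuinely uses $J(X)\ne 0$ to manufacture an element with prescribed behavior on each clopen piece, and that is precisely the content of \cite[Theorem~6.1]{OCompact}. Your argument for~(3) inherits the same gap through its appeal to ``the same assembly argument.''

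In~(4), the specific mechanism you propose does not hold together. The claim that ``the components at non-isolated points are redundant'' is unjustified: for a non-isolated $M_0\in Y$ it is not clear that $\bigcap_{N\in Y\setminus\{M_0\}}Q_N\subseteq Q_{M_0}$, since localization at $M_0$ does not commute with the infinite intersection. Moreover, if that claim were true, one step would suffice and there would be nothing for the Cantor--Bendixson induction to do; so the ``peeling'' you describe is not coherently tied to the redundancy claim. The correct argument, carried out in \cite{HOR}, uses the finiteness of the Cantor--Bendixson rank in a more delicate way than a one-shot separation of isolated versus non-isolated components.
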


 \begin{proof}
  Lemma~\ref{count} implies that the patch closure of $X$ consists of DVRs, so (1)  follows from Proposition~\ref{div theorem}. 
 By \cite[Theorem 6.1]{OCompact}, every finitely generated ideal in a one-dimensional Pr\"ufer domain with nonzero Jacobson radical is principal, so (2) follows.  
For (3), apply  \cite[Lemma~6.3]{HOR}, and for (4) observe that 
 by  Lemma~\ref{CB}, 
$X^{d+1} = \emptyset$.   Thus, by \cite[Theorem 6.6 and Corollary 6.7]{HOR}, 
every ideal of $A(X)$ is an irredundant intersection of powers of maximal ideals. 
 \end{proof}

   By Theorem~\ref{qs Prufer}  the ring $A(X) = \bigcap_{V \in X}V$ is a one-dimensional Pr\"ufer domain with $J(A) \ne 0$.  For such a ring, it is shown in \cite{HOR} that the  Cantor-Bendixson rank of the maximal spectrum of $A$ reflects aspects of the ideal theory of $A$ related to Loper and Lucas' factorization theory in \cite{LL} for  ideals in almost Dedekind domains.  
 

 \begin{corollary} \label{all}
   Let $d \geq 1$, and  let $X = {\rm Div}_d(R)$. The localizations of $A(X)$ at  maximal ideals are precisely the valuation rings in ${\rm Div}_d(R)$. 
   \end{corollary}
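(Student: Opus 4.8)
The goal is to identify the maximal ideals of $A := A(X)$ with the valuation rings in $X = {\rm Div}_d(R)$. Since Theorem~\ref{qs Prufer}(1) already tells us $A$ is an almost Dedekind domain (so every localization at a maximal ideal is a DVR) with nonzero Jacobson radical, the natural strategy is to use the standard correspondence between maximal ideals of a Pr\"ufer domain $A$ with quotient field $F$ and the valuation overrings $A_M = V$ for $M$ maximal. Concretely, for $V \in X$ the contraction ${\ff M}_V \cap A$ is a prime ideal of $A$, and because $J(X) = J(A) \ne 0$ this prime is in fact maximal and $A_{{\ff M}_V \cap A} = V$; conversely I must show every maximal ideal of $A$ arises this way. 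The first step is therefore to invoke Lemma~\ref{count}: with $X = {\rm Div}_d(R)$ one has $X^d = \emptyset$, so in particular $X$ has no patch limit points in $\X(R)$ lying outside $X$, and moreover $X$ is patch closed. Hence the patch closure of $X$ is $X$ itself, and by Lemma~\ref{rank one theorem} (applied inside the quasicompact patch-closed set $X$ of DVRs) the valuation overrings of $A$ whose maximal ideal meets $J(A)$ nontrivially are exactly the members of $X$.

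**Key steps.** First I would record that ${\rm Div}_d(R)$ is patch closed: this follows because $\lim({\rm Div}_d(R)) \subseteq {\rm Div}_{d-1}(R) \subseteq {\rm Div}_d(R)$ by the containment established in the proof of Lemma~\ref{count} (each point of $R$ of level $d$ is dominated by only finitely many prime divisors of level $d+1$, by Remark~\ref{point remark}, so the finiteness hypothesis of Lemma~\ref{count} holds). Second, since $A$ is a one-dimensional Pr\"ufer domain, its valuation overrings are precisely the localizations $A_M$ at prime (= maximal, since $\dim A = 1$) ideals $M$, together with $F$ itself; and each such $A_M$ has quotient field $F$. Third, I would show every $A_M$ with $M \ne 0$ lies in $X$: the set of valuation overrings of $A$ properly contained in $F$ is the patch closure of $X$ in $\X(A) = \X(R)$ — this is exactly the content of the equivalence in Lemma~\ref{rank one theorem} together with the fact that $A = A(X)$ with $J(X) \ne 0$, since a one-dimensional Pr\"ufer domain is the intersection of its localizations at maximal ideals and each maximal-ideal localization must be "visible" in the patch closure of any representing family — and the patch closure of $X$ is $X$ by step one. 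Conversely, for each $V \in X$, the contraction $P_V := {\ff M}_V \cap A$ contains $J(A) \ne 0$, hence is a nonzero prime of the one-dimensional domain $A$, hence maximal, and $A_{P_V} \subseteq V$ with both DVRs having quotient field $F$ forces $A_{P_V} = V$. Combining the two directions gives the bijection $M \leftrightarrow A_M$ between nonzero maximal ideals of $A$ and elements of $X = {\rm Div}_d(R)$.

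**Main obstacle.** The one point that needs care rather than routine bookkeeping is the claim that \emph{every} nonzero-maximal-ideal localization of $A(X)$ actually appears among the $V \in X$ (equivalently, that $X$ is not "too small" to represent $A$, so that no extra valuation overrings sneak in as localizations). This is where patch-closedness of $X$ is essential: if $X$ were not patch closed, $A(X) = A(\overline{X})$ could have localizations corresponding to limit points outside $X$. I would handle this by citing the description, available from \cite{OCompact} and implicit in Lemma~\ref{rank one theorem}, that for a one-dimensional Pr\"ufer domain $A$ with $J(A) \ne 0$ the set of its nontrivial valuation overrings is patch closed and is the smallest patch-closed set representing $A$; since $X$ is itself patch closed and represents $A$, the two sets coincide. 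The remaining verifications — contractions of maximal ideals of valuation rings are maximal because they contain the nonzero Jacobson radical, and containment of DVRs with common quotient field is equality — are standard and I would dispatch them in a sentence each.
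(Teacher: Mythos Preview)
Your proposal is correct and follows essentially the same route as the paper: show $X={\rm Div}_d(R)$ is patch closed via Lemma~\ref{count}, then conclude that the nontrivial valuation overrings of the Pr\"ufer domain $A(X)$ coincide with $X$, and finish by the standard identification of valuation overrings of a Pr\"ufer domain with its localizations at prime ideals. The only difference is in the citation for the key step that the valuation overrings of $A(X)$ are exactly $X\cup\{F\}$: the paper invokes \cite[Proposition~5.6(5)]{OZR} directly, whereas you argue it is implicit in Lemma~\ref{rank one theorem} and the results of \cite{OCompact}; the paper's reference is the cleaner and more precise one for this statement, since Lemma~\ref{rank one theorem} as stated does not literally assert that the maximal-ideal localizations of $A(X)$ are exactly the patch closure of $X$.
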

   
   \begin{proof} By Lemma~\ref{count}, $X$ is a patch closed set. By \cite[Proposition~5.6(5)]{OZR}, the fact that $A(X)$ is a Pr\"ufer domain and $X$ is a patch closed set consisting of rank one valuation rings implies that  the set of valuation overrings of $A(X)$ is $X \cup \{F\}$. Since the valuation overrings of a Pr\"ufer domain are precisely the localizations of the domain at its prime ideals, the corollary now follows. 
   \end{proof} 
   
   \begin{remark}  
   Let $A$ be an almost Dedekind domain. It is shown in \cite{HO} that 
$\Max(A)$ is scattered (i.e., every subspace contains an isolated point) if and only if every nonzero proper ideal of $A$ has a unique representation as an irredundant intersection of completely irreducible ideals. 
Thus the maximal spectrum of the ring $A(X)$ in Theorem~\ref{qs Prufer} is scattered, a fact that also follows from direct topological arguments using the observation in Lemma~\ref{count} that the $d$-th Cantor-Bendixsen derivative of $\Max(A(X))$ is the empty set. 
\end{remark}

    It would be interesting to have an intrinsic characterization of the ring $A(X)$ in Corollary~\ref{all} that reflects the fact that the localizations of $A(X)$ at maximal ideals are precisely the prime divisors that dominate $R$ and have  level at most $d$.  If $R$ is 
  regular and  $f$ and $g$ are  nonzero elements of $R$, the rational function $f/g$  has 
  a position at each point in the quadratic tree over $R$ that is described in 
  \cite{HLO} as either a zero, a pole, a unit, or undetermined.  The ring $A(X)$ is an
  overring of $R$ that describes the rational functions that have no poles 
  among  the prime   divisors in  $X = {\rm Div}_d(R)$.



\section{Almost Dedekind domains with specified residue fields} 

The results in this section are similar in spirit to those in the previous sections. They 
 have the related goal of producing almost Dedekind domains that are not Noetherian   in the natural 
 setting of overrings of  a normal Noetherian domain of dimension two.
 

\begin{lemma} \cite[Theorem 5.3]{OTop} \label{5.3} 
Let $A$ be a domain. If  there is a subset $X$ of $\X(A)$  such that $A = A(X)$, $J(X) \ne 0$, $\lim(X)$ is finite, and all but finitely many valuation rings in $X$ have rank one, then  
 A is a Pr\"ufer domain with nonzero Jacobson radical. 
 \end{lemma}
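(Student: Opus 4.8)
The plan is to prove the two conclusions separately. The statement $J(A)\neq 0$ requires none of the finiteness hypotheses: choose $0\neq t\in J(X)=\bigcap_{V\in X}{\ff M}_V$; then $t\in\bigcap_{V\in X}V=A$, and for each $a\in A$ the element $1+ta$ is a unit in every $V\in X$ (because $ta\in{\ff M}_V$), hence a unit in $A$, so $t\in J(A)$. The substance of the lemma is therefore that $A$ is a Pr\"ufer domain.

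The first step is to reduce to the case that $X$ is patch closed. For every $f\in Q(A)$ the set $\{V\in\X(A):f\in V\}$ is patch clopen, so $A(X)=A(\overline X)$, where $\overline X$ is the patch closure of $X$ in $\X(A)$. Replacing $X$ by $\overline X$ preserves the hypotheses: the patch topology is Hausdorff, so passing from $X$ to $\overline X=X\cup\lim(X)$ alters neither the finite set $\lim(X)$ nor the finite set of members of rank $>1$, and $J(\overline X)=J(X)\neq 0$ because the patch-clopen set $\{V:t\in{\ff M}_V\}$ contains $X$ and hence $\overline X$. So assume $X$ is patch closed, and therefore patch quasicompact.

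The second step is to decompose $X$ into pieces each having a single limit point. Write $\lim(X)=\{W_1,\dots,W_k\}$; since the patch topology has a basis of clopen sets, choose pairwise disjoint patch-clopen subsets $N_1,\dots,N_k$ of $\X(A)$ with $W_j\in N_j$, set $N_0=\X(A)\setminus\bigcup_{j=1}^k N_j$, and put $X_j=X\cap N_j$ for $0\le j\le k$. Then $X_0$ is a patch-closed subset of $\X(A)$ containing no limit point of $X$, hence is discrete, and being a closed subspace of a quasicompact space it is finite. Thus $A=A(X_0)\cap A(X_1)\cap\cdots\cap A(X_k)$, and $A(X_0)$, a finite intersection of valuation rings with quotient field $Q(A)$, is a (semilocal) Pr\"ufer domain. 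For each $j\ge 1$ the set $X_j$ has $W_j$ as its only patch limit point; deleting from $X_j$ its finitely many members of rank $>1$ (again a finite intersection of valuation rings, hence Pr\"ufer) reduces us to showing that $A(X_j')$ is Pr\"ufer, where $X_j'$ is a nonempty set of rank-one valuation rings whose only patch limit point is $W_j$.

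If $W_j$ has rank one, then $\overline{X_j'}=X_j'\cup\{W_j\}$ is a patch-quasicompact set of rank-one valuation rings and $J(X_j')\supseteq J(X)\neq 0$, so Lemma~\ref{rank one theorem} shows $A(X_j')$ is a one-dimensional Pr\"ufer domain. If instead $W_j$ has rank $\ge 2$, then $W_j$ genuinely lies in the patch closure of $X_j'$, so Lemma~\ref{rank one theorem} no longer applies and one obtains only $A(X_j')\subseteq W_j$; I would handle this case by induction on the rank of $W_j$, peeling off the coarsenings of $W_j$ one layer at a time so as to reduce to the rank-one situation just treated. Finally I would reassemble $A$ from the finitely many Pr\"ufer overrings $A(X_0),A(X_1),\dots,A(X_k)$: since they lie over the pairwise disjoint clopen pieces $N_0,N_1,\dots,N_k$ of $\X(A)$, they should be pairwise independent (having $Q(A)$ as their only common overring), so each localization $A_{\mathfrak p}$ coincides with a localization of one of the $A(X_j)$ and is therefore a valuation ring, giving that $A$ is Pr\"ufer. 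I expect the main obstacles to be the rank-$\ge 2$ limit-point case and the verification that these finitely many Pr\"ufer pieces glue to a Pr\"ufer domain.
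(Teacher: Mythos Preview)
The paper does not prove this lemma: it is quoted verbatim from \cite[Theorem~5.3]{OTop} and used as a black box, so there is no in-paper argument to compare your attempt against.

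As for the attempt itself, your preliminary reductions are sound: the verification that $J(A)\ne 0$, the passage to the patch closure $\overline{X}$ (with the check that $A(\overline{X})=A(X)$, $J(\overline{X})=J(X)$, and that $\lim(\overline X)$ and the set of rank~$>1$ members stay finite), and the splitting of $\overline{X}$ into finitely many clopen pieces $X_0,X_1,\dots,X_k$ each having at most one limit point are all correct. The appeal to Lemma~\ref{rank one theorem} when the unique limit point $W_j$ has rank one is also correct.

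The two places you flag as obstacles are genuine gaps, not just technicalities. First, the case where $W_j$ has rank~$\ge 2$ really does require separate work; Lemma~\ref{rank one theorem} gives nothing there, and ``peeling off coarsenings'' is only a slogan until one specifies what replaces $X_j'$ at each stage and why the inductive hypothesis applies. Second, and more seriously, the gluing step is not automatic: a finite intersection of Pr\"ufer domains with the same quotient field need \emph{not} be Pr\"ufer. For instance, if $D$ is a two-dimensional regular local ring with maximal ideal $(x,y)$, then $D[1/x]$ and $D[1/y]$ are Dedekind domains with $D[1/x]\cap D[1/y]=D$, which is not Pr\"ufer. Your situation is better because the $X_j$ lie in pairwise disjoint patch-clopen sets, but the assertion that ``each localization $A_{\mathfrak p}$ coincides with a localization of one of the $A(X_j)$'' presupposes control over $\Spec A$ that you have not yet established---at this point you do not even know $A$ is integrally closed in any useful local sense. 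An actual proof along your lines would need an argument showing that the clopen partition of $X$ forces the valuation overrings of $A$ to come from a single piece, or else a direct verification of a Pr\"ufer criterion (e.g.\ invertibility of two-generated ideals) that respects the decomposition.
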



Lemma~\ref{5.3}, 
along with an existence result for PIDs due to Heitmann, gives a way to construct almost Dedekind domains with prescribed residue fields. While  Heitmann \cite{Heit} constructs PIDs with countably many specified countable residue fields, when we use his PID in our context to produce an almost Dedekind domain with specified residue fields  the result is not as tight: Our methods  force us to  accept an additional residue field, one of characteristic $0$, that is not specified in the original construction.   This is  the field $L$ in the next theorem. A residue field of characteristic $0$ cannot be avoided in the construction of a one-dimensional domain with nonzero Jacobson radical and infinitely many maximal ideals. This is because every zero-dimensional domain with infinitely many maximal ideals has a residue field with characteristic~$0$ \cite[2.3, p.~133]{GH}.

\begin{theorem}  \label{Heit} Let ${\cal F}$ be a countable collection of countable fields such that for each characteristic $p \ne 0$, ${\cal F}$ contains only finitely many fields of that characteristic.  Then there is a countable non-Noetherian 
almost Dedekind domain $A$  with quotient field $K$ of  characteristic $0$ such that $A$ has nonzero Jacobson radical and the collection of residue fields of $A$ is precisely ${\cal F} \cup \{L\}$, where  $L$ is a subfield of $K$ such that $K=L(x)$  for some $x \in K$  transcendental over $L$.   
\end{theorem}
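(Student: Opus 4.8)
The plan is to build $A$ as an intersection of valuation overrings of Heitmann's PID, using Lemma~\ref{5.3} to guarantee the Pr\"ufer (hence almost Dedekind) property. First I would invoke \cite{Heit} to obtain a countable PID $D$ with quotient field $K_0$ whose residue fields are exactly the prescribed collection $\mathcal{F}$; since $\mathcal{F}$ is countable and $D$ is countable, this is available. The maximal ideals of $D$ come in finitely many ``characteristic classes,'' one for each $p\neq 0$ appearing in $\mathcal{F}$ plus possibly one for characteristic $0$. Next I would adjoin a transcendental $x$ and pass to a suitable localization of $D[x]$ (or to $D(x)$ intersected with appropriate valuation rings): concretely, set $K=K_0(x)$, let $L$ be an algebraic closure issue-free choice, namely $L$ will be the residue field forced by the ``extra'' valuation. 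The key construction is to take $X$ to be the set of DVRs of $K$ obtained by extending each maximal ideal $\mathfrak{m}$ of $D$ to a valuation on $K_0(x)$ trivial on $x$ (the Gaussian/trivial-on-$x$ extension, with residue field $(D/\mathfrak{m})(x)$ — but that is the wrong residue field), so instead one extends via $\mathfrak{m}+ (x)$-type centers so that the residue field of the extended DVR is $D/\mathfrak{m}$ itself. Then one adds a single DVR $W$ of $K$ whose residue field is a characteristic-$0$ field $L\subseteq K$ with $K=L(x)$.

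The heart of the argument is arranging that $X$ satisfies the hypotheses of Lemma~\ref{5.3}: $A=A(X)$, $J(X)\neq 0$, $\lim(X)$ finite, and all but finitely many $V\in X$ of rank one. The rank-one condition is automatic if each $V$ is a DVR. For $\lim(X)$ finite, I would exploit the fact that the maximal ideals of $D$ accumulate only in the finitely many characteristic classes, and that the patch limit points of the corresponding extended DVRs collapse onto a finite set of valuation rings of $K$ — intuitively, the limit of the DVRs centered on the primes of characteristic $p$ is a single valuation ring, and there are only finitely many such $p$; this is exactly the kind of bookkeeping that Lemma~\ref{5.3} is designed to absorb, so the limit valuation rings may have higher rank but there are only finitely many of them. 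To get $A=A(X)$ one checks that $\bigcap_{V\in X}V$ has quotient field $K$ (which follows since $X$ separates points, e.g. $x$ and distinct elements of $D$ are seen) and that $J(X)\neq 0$ by producing a single nonzero element — say a uniformizer $\pi$ of $D$, or $x$, or an appropriate product — lying in every $\mathfrak{M}_V$; this requires the DVRs in $X$ to have a common center downstairs, which is why the centers are taken to contain a fixed prime or the ideal $(x)$.

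Once Lemma~\ref{5.3} gives that $A$ is a Pr\"ufer domain with $J(A)\neq 0$, one upgrades to \emph{almost Dedekind} by observing that $A$ is one-dimensional (the valuation overrings other than $K$ all have been arranged to lie over height-one data, or one cites Lemma~\ref{rank one theorem}/the argument of Theorem~\ref{qs Prufer} with the patch closure of $X$ consisting of DVRs together with finitely many finite-rank points that still force dimension one) and that each localization at a maximal ideal is a DVR. Non-Noetherianness follows because $A$ has infinitely many maximal ideals with a nonzero Jacobson radical, which is impossible in a one-dimensional Noetherian domain. Finally, the residue field computation: the residue field of $A$ at the maximal ideal corresponding to $V\in X$ equals the residue field of $V$ (standard for Pr\"ufer domains, since $A_{M}=V$), and by the choice of extensions these are exactly the fields in $\mathcal{F}$ for the ``downstairs'' primes, plus the single field $L$ at the maximal ideal corresponding to $W$; one checks no other residue fields appear because the only other valuation overring is $K$ itself.

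The main obstacle I anticipate is the residue-field control in the extension step: one must extend each maximal ideal of the PID $D$ to a DVR of $K=K_0(x)$ \emph{without enlarging the residue field} (the naive Gaussian extension gives residue field $k(x)$, not $k$), while simultaneously keeping all these DVRs with a common center so that $J(X)\neq 0$, and keeping $\lim(X)$ finite. Threading all three requirements at once — likely by centering the extended valuation at a prime like $\mathfrak{m}D[x] + (x-c_{\mathfrak m})$ for cleverly chosen constants, or by first constructing an appropriate subring of $D(x)$ — is the delicate part; everything after invoking Lemma~\ref{5.3} is routine Pr\"ufer-domain bookkeeping.
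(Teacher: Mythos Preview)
Your overall architecture matches the paper's: start from Heitmann's countable PID $B$ with quotient field $L$ and residue fields exactly $\mathcal{F}$, adjoin an indeterminate $T$, extend each maximal ideal of $B$ to a DVR of $L(T)$ with unchanged residue field, set $X$ equal to this collection, and invoke Lemma~\ref{5.3}. You have also correctly isolated the genuine difficulty, the residue-field-preserving extension. What you are missing is the mechanism. The paper's device is a completion argument: since $B_{\mathfrak p}$ is countable and $\widehat{B}_{\mathfrak p}$ is uncountable, there is an $\alpha$ in the maximal ideal of $\widehat{B}_{\mathfrak p}$ that is transcendental over $L$; the substitution $T\mapsto\alpha$ embeds $B[T]_{(b,T)}$ into $\widehat{B}_{\mathfrak p}[[T]]/(T-\alpha)\cong\widehat{B}_{\mathfrak p}$, and contracting the $\mathfrak p$-adic valuation to $L(T)$ gives a DVR centered on $(b,T)$ with residue field exactly $B/\mathfrak p$. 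Your proposed center $\mathfrak m D[x]+(x-c_{\mathfrak m})$ with $c_{\mathfrak m}\in D$ cannot work as stated, because that localization is a two-dimensional regular local ring, not a DVR; the whole point is that the ``constant'' must come from the completion, outside $L$.

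Your limit-point picture is also wrong, and this matters for the structure of the answer. The patch limits of $X$ do not sort themselves by residue characteristic. Every $V\in X$ is centered on an ideal containing $T$, so any patch limit $U$ has $T\in\mathfrak M_U$; and for any $g\in B[T]\setminus TB[T]$, only finitely many $V\in X$ fail to invert $g$ (since $B[T]$ is Noetherian), so $L[T]_{(T)}\subseteq U$. Hence there is a \emph{single} patch limit point $L[T]_{(T)}$, and this DVR, with residue field $L$, is not an extra valuation $W$ that you adjoin by hand but the one forced on you as the closure point. This same observation gives $J(X)\ne 0$ immediately (take $T$) and shows that the patch closure of $X$ consists entirely of DVRs, so Lemma~\ref{rank one theorem} upgrades ``Pr\"ufer'' to ``almost Dedekind'' directly, with the localizations of $A(X)$ at maximal ideals being exactly $X\cup\{L[T]_{(T)}\}$. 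Your argument for non-Noetherianness is fine.
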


\begin{proof}  Heitmann shows in \cite{Heit} that there is a countable  PID $B$ of characteristic $0$ whose collection of residue fields is precisely ${\cal F}$.   Let $T$ be an indeterminate for $B$, and consider $D =B[T]$. 
  For each maximal ideal ${\ff p}=(b)$ of $B$, we choose a DVR centered on the maximal ideal $(b,T)D$ having residue field $D/(b,T)D = B/{\ff p}$. This can be done in the following manner. Since $B_{\ff p}$ is a countable DVR and its completion $\widehat{B}_{\ff p}$ is uncountable, there is an element $\alpha$ of the maximal ideal of $\widehat{B}_{\ff p}$ that is transcendental over $L$, the quotient field of $B$. Then  $$0 = B[T]_{(b,T)} \cap (T-\alpha)\widehat{B}_{\ff p}[[T]],$$ so that $B[T]_{(b,T)}$ embeds into $\widehat{B}_{\ff p}[[T]]/(T-\alpha)$. The latter ring is isomorphic to the DVR $\widehat{B}_{p}$, and identifying $B[T]_{(b,T)}$ with its image in $\widehat{B}_{\ff p}$, this  DVR contracts in $L(T)$ 
   to a DVR centered on $(b,T)D$ with residue field $B/{\ff p}$. 

Now let $X$ be the collection of  all these DVRs, one for each maximal ideal of $B$.  
 We claim that the only patch limit point of $X$ in $\Zar(D)$ is $L[T]_{TL[T]}$, where $L$ is the quotient field of $B$. 
 Suppose that $U$ is a patch limit point of $X$.  
 We show that $U = L[T]_{TL[T]}$.
 By \cite[Corollary 3.8]{FFL},
  there is a nonprincipal ultrafilter ${\cal F}$ on the set $X$ such that   $$U = \{q \in K:\{V \in X:q \in V\} \in {\cal F}\}.$$  
 To show that $L[T]_{TL[T]} \subseteq U$, suppose $g \in D[T] \setminus TD[T]$.  Then since $D[T]$ is a Noetherian ring,  $g$ is contained in only finitely many maximal ideals of $D[T]$ that contain $T$. 
  Thus $1/g \in V$ for all but finitely many $V \in X$, and so $\{V \in X:1/g \in V\} \in {\cal F}$. We conclude that $1/g \in U$, and hence that $L[T]_{TL[T] }= D[T]_{TD[T]} \subseteq U$.   An argument such as in the proof of (3) implies (1) of Proposition~\ref{div theorem} shows that  $T \in {\ff M}_U$. Thus $L[T]_{TL[T]} \subseteq U \subsetneq K$ and since $L[T]_{TL[T]}$
 is a DVR, we have  $L[T]_{TL[T]} = U$.  This proves that $L[T]_{TL[T]} $ is the only patch limit point of $X$.

  Next, by Lemma~\ref{5.3}, $A(X)$ is a one-dimensional Pr\"ufer domain.
  The fact that the patch closure of $X$ is $X \cup \{ L[T]_{(T)} \}$ implies 
   that $A(X)$ is an 
  almost Dedekind domain whose localizations at maximal ideals are 
   the members of this patch closed set; see \cite[Proposition~5.6(5)]{OZR}. The theorem now follows. 
\end{proof} 

In the proof of Theorem~\ref{Heit}, the patch closure of $X$ in $\Zar(D)$ is $X \cup \{L[T]_{(T)}\}$, and so arguments as in the proof of  Theorem~\ref{qs Prufer} show that $A(X)$ has only one maximal ideal that is not finitely generated, and 
 $A(X)$ has the property that every nonzero ideal can be represented uniquely as an irredundant intersection of powers of  maximal ideals.

\end{document}